\numberwithin{equation}{section}
\theoremstyle{definition}
\newtheorem{thm}{Theorem}[section]
\newtheorem{lem}[thm]{Lemma}
\newtheoremstyle{mydefinition}
  {}   
  {}   
  {}  
  {0pt}       
  {} 
  {.}         
  {5pt plus 1pt minus 1pt} 
  {}          
\newtheoremstyle{citing}
  {}
  {}
  {\itshape}
  {}
  {\bfseries}
  {.}
  {.5em}
  {\thmnote{#3}}
\theoremstyle{citing}
\newcommand{\Kmn}{K^{m,n}}
\newcommand{\ve}{\varepsilon}
\newcommand{\pKmn}{\pi_1(S^4 \setminus {\rm int}N(\Kmn))}
\newcommand{\SLC}{{\rm SL}_2(\mathbb{C})}
\newcommand{\SLZ}{{\rm SL}_2(\mathbb{Z}_3)}
\newcommand{\pSLZ}{{\rm SL}^{\ast}_2(\mathbb{Z}_3)}
\title{Representations of branched twist spins with non-trivial center of order 2}
\author{Mizuki Fukuda}
\address{Mathematics for Advanced Materials Open Innovation Laboratory,
AIST, c/o AIMR, Tohoku University,
2-1-1, Katahira, Aoba-ku, Sendai, Miyagi 980-8577, Japan}
\email{mizuki.fukuda.d2@tohoku.ac.jp}
\subjclass[2010]{Primary~57Q45; Secondary~57M60, 57M27}
\keywords{2-knots, circle actions, representations}
\begin{document}
\maketitle
\vspace{-5mm}
\begin{abstract}
It is known that a presentation of the knot group of a branched twist spin is obtained from a Wirtinger presentation of the original 1-knot group by adding a generator corresponding to a regular orbit of the circle action and a certain relator.
 In particular, the additional generator is an element of the center of the knot group.
 In this paper, we focus on $\SLZ$-representations and dihedral group representations.
 For the former case,  we give a sufficient  condition for the existence of an $\SLZ$-representation for a branched twist spin. For the latter case, we determine the number of even-ordered dihedral group representations of branched twist spins.
\end{abstract}

\section{Introduction}
A 2-knot is a smoothly embedded 2-sphere in the 4-sphere. Two 2-knots are equivalent if there exists a smooth ambient isotopy of $S^4$ such that it sends one to the other. 
 
Studies of classifying concrete 2-knots began with spun knots made by Artin~\cite{A}.
He gives a presentation of the knot group of a spun knot and show that the knot group of the spun knot is isomorphic to that of the original 1-knot. 
After several decades, Zeeman introduces a twist spun knot and gives a presentation of its knot group~\cite{Z}.
Then Pao introduces a wider class called a branched twist spin~\cite{Pa} and Plotnick gives a presentation of its knot group~\cite{Pl2}.
It is known that branched twist spins are characterized by 1-knots and two coprime integers $m,n$.
 We denote branched twist spins as $K^{m,n}$.
The precise definition of branched twist spins is given in Section 2.
Note that $K^{m,1}$ is the $m$-twist spun knot of $K$ and $K^{0,1}$ is the spun knot of $K$.
In \cite{G}, Gordon shows that $K^{m+1,1}$ is obtained from $K^{m,1}$ by the Gluck twist once and, in~\cite{F3}, the author generalizes his result for $K^{m,n}$.

Although the knot group of a branched twist spin has information of the original 1-knot group,
not much is known about how distinguish them.
For instance, it is known that the Alexander polynomials for 1-knots are very useful to distinguish 1-knots and they can be calculated easily.
On the other hand, the Alexander polynomials of almost all branched twist spins are equal to 1 and they cannot distinguish branched twist spins.
This is because the generators of the first elementary ideal of a branched twist spin interfere each other and the greatest common divisor in $\mathbb{Z}[t, t^{-1}]$ is equal to 1. 
Observing the first elementary ideals of branched twist spins directly, we can distinguish them sometimes~\cite{F1}.

In our previous study~\cite{F2}, we obtain an invariant of branched twist spins by using irreducible $\SLC$-metabelian representations.
This result is obtained as an analogue of counting conjugacy classes of irreducible $\SLC$-representations of 1-knots~\cite{L, N, NY}.
The facts that the fiber of $K^{m,n}$ is the punctured cyclic $m$-branched cover of $S^3$ along $K$ and the monodromy of the circle action is periodic of order $n$ are used in the proof.

In this paper, we are interested in representations of the knot group of a branched twist spin into finite groups.
A branched twist spin $K^{m,n}$ has a non-trivial center derived from a regular fiber if $m \neq 0,\pm1$.
The image of the center should be non-trivial, otherwise it becomes a representation of the knot group of the $1$-knot and it is not interesting.
Note that $K^{0,1}$ and $K^{\pm1,1}$ is a spun knot of $K$ and the trivial 2-knot, respectively.
For spun knots, including the trivial 2-knot, we can classify them by using the Gordon-Luecke theorem~\cite{GL}. Hence we study $K^{m,n}$ with $m \neq 0, \pm1$.
As representations with non-trivial center, we focus on $\SLZ$-representations and even-ordered dihedral group representations.
We will give a sufficient condition for the existence of an $\SLZ$-representation (Theorem~\ref{thm1}) and also determine the number of even-ordered dihedral group representations (Theorem~\ref{thmD2k}). 
As a consequence of the result about dihedral group representations, we have the following result.

\begin{thm}\label{Main}
Let $K_1$ and $K_2$ be non-trivial 1-knots and $K^{m_1,n_1}_1$ and $K^{m_2,n_2}_2$ be branched twist spins.
If $m_1$ and $m_2$ are different, then $K^{m_1,n_1}_1$ and $K^{m_2,n_2}_2$ are not equivalent.
\end{thm}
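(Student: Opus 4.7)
The plan is to extract the integer $m$ from the isomorphism type of the knot group of $K^{m,n}$ using the dihedral representation count supplied by Theorem~\ref{thmD2k}. If $K_1^{m_1,n_1}$ and $K_2^{m_2,n_2}$ are equivalent, their knot groups are isomorphic, and in particular the number of representations into every finite group agrees. It therefore suffices to show that the sequence of counts $N_k(K^{m,n})$, where $N_k$ counts representations into the dihedral group of order $2k$ in which the central generator $h$ has non-trivial image, determines $m$ independently of $n$ and of the particular non-trivial 1-knot $K$.

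The first step is to unpack Theorem~\ref{thmD2k} via Plotnick's presentation. One writes $\pi_1(S^4 \setminus K^{m,n})$ by adjoining to a Wirtinger presentation of $\pi_1(S^3 \setminus K)$ a central generator $h$, the class of a regular orbit of the circle action, together with one extra ``meridian-power equals $h$-power'' relation whose exponents are determined by $(m,n)$. In any dihedral target $D_{2k}$, centrality forces $\rho(h)$ to land in the centre, which is a non-trivial subgroup of order 2 exactly when $k$ is even, and $\rho(h)$ must be the unique central involution. The extra relation then imposes a purely arithmetic divisibility condition on $k$ in terms of $m$ (essentially, $k$ has to divide $2m$, with a further congruence controlled by $n$), while the Wirtinger relations reduce to a dihedral representation of the 1-knot group. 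Consequently the count in Theorem~\ref{thmD2k} should factor as an arithmetic term depending only on $m$ and $n$, vanishing for $k$ outside a finite set of divisors of $2m$, times a term counting compatible dihedral representations of $\pi_1(S^3 \setminus K)$.

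From this decomposition I would read off $m$ as an arithmetic invariant of the support $\{k : N_k(K^{m,n}) > 0\}$, such as its maximum or its least common multiple, and this is where the non-triviality of the base knots enters: a non-trivial classical knot group surjects onto dihedral groups of infinitely many orders, so the $K$-dependent factor is non-zero at enough values of $k$ that the $m$-dependent cutoff is actually visible in the sequence $N_k$. The main obstacle is that the $K_1$- and $K_2$-dependent factors may differ wildly, so one cannot simply match counts term-by-term; the argument has to isolate the purely $m$-determined feature (such as the cutoff value, or the divisor lattice of $m$) that is common to both sides. Once this comparison forces $m_1 = m_2$, the theorem is immediate.
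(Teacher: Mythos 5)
Your overall strategy---use the counts of $D_{2k}$-representations with $\tau(h)$ equal to the central involution as an invariant and extract $m$ from them---is the same as the paper's, but two of your intermediate claims are false and they derail the extraction step. First, the count does \emph{not} factor as an arithmetic term times ``a term counting compatible dihedral representations of $\pi_1(S^3\setminus K)$.'' The condition $\tau(h)=r^k\neq 1$ together with the relation $x_i^{|m|}h^{\beta}=1$ (which holds for \emph{every} meridian $x_i$, since all meridians are conjugate) forces each $\tau(x_i)$ to be a rotation $r^{p_i}$: a reflection squares to $1$, so its $|m|$-th power is either $1$ or itself, and by Lemma~\ref{keylemma} neither can equal $(\tau(h))^{-\beta}\in\{1,r^k\}$ without forcing $r^k=1$ or $s\in\langle r\rangle$. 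Once all meridians land in the abelian rotation subgroup, the Wirtinger relations collapse them to a single $r^{p}$, so the count is the purely arithmetic quantity of Theorem~\ref{thmD2k}, completely independent of $K$. The non-abelian dihedral representations of $\pi_1(S^3\setminus K)$ (Fox-coloring type, meridians to reflections) that you are implicitly counting are all killed by the requirement that $h$ map non-trivially; the hypothesis that $K_1,K_2$ are non-trivial is needed not to guarantee a rich supply of such representations, but only because for the trivial knot $K^{m,n}$ is trivial and its group is infinite cyclic, so Theorem~\ref{thmD2k} does not apply and the counts cannot see $m$.

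Second, the support $\{k: N_k>0\}$ is not a finite set of divisors of $2m$, so ``read off $m$ from the cutoff'' fails. For $m$ odd the count is $(\gcd(|m|,k)+1)/2\geq 1$ for every $k$ (the abelian representation $p=0$ always exists), and for $m$ even the count $d/2$ is positive whenever $d=\gcd(|m|,k)$ is even, e.g.\ $m=2$, $k=6$. What actually recovers $|m|$ is the \emph{value} of the count at a single well-chosen $k$: the paper takes $k=|m_1|$, so that $\#\{\tau_1\}$ is $|m_1|/2$ or $(|m_1|+1)/2$ while $\#\{\tau_2\}$ is $d'/2$, $(d'+1)/2$, or $0$ with $d'=\gcd(|m_1|,|m_2|)$, and then splits into the cases ``same parity with $|m_1|>|m_2|$'' (where $d'\leq|m_2|<|m_1|$ gives different counts) and ``different parity with $m_1$ odd'' (where $d'$ odd and $m_2$ even give $\#\{\tau_2\}=0<\#\{\tau_1\}$). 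The degenerate cases $m=0,\pm1$ must also be handled separately, since there the presentation degenerates ($h=1$, resp.\ the group is infinite cyclic). To repair your argument you would need to discard the claimed factorization, verify the $K$-independence of the count, and replace the support-based invariant with an evaluation of the count at $k=|m_1|$ (or an equivalent device such as $\max_k\gcd(|m|,k)$).
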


Note that two kinds of presentations of the knot group of a branched twist spin are known:
one is obtained from a Wirtinger presentation of the 1-knot and the other is obtained from the fiber surface of the branched twist spin.
The former presentation is used to show the results in this paper since the relations in the Wirtinger presentation are suitable to determine the representations. The latter presentation is used in our previous work~\cite{F2}. 

This paper is organized as follows. 
In Section 2, we will define a branched twist spin and give a certain property of $G$-representations of a branched twist spin with a group $G$ having a non-trivial center of degree 2.
In Section 3, we will check the order of each element in $\SLZ$ and observe the condition for the existence of  $\SLZ$-representations of branched twist spins concretely.
In Section 4, we will give even-ordered dihedral group representations of branched twist spins concretely and show Theorem~\ref{Main}. 

\noindent
{\bf Acknowledgement.} The author would like to express his appreciation to Masaharu Ishikawa for many valuable comments.

\section{branched twist spins}
To define branched twist spins, we first recall a classification of $S^1$-actions on $S^4$. See~\cite{Fi, Pa} for more details.
Consider an effective locally smooth $S^1$-action on $S^4$. 
It is known by Pao that any $S^1$-action is weak equivalent to the $S^1$-action whose orbit space is either the 3-ball $D^3$ or $S^3$.
If the orbit space is $S^3$, there exist at most 2 types of exceptional orbit.
Here the type of an orbit is determined by its isotropy group and an exceptional orbit is said to be $\mathbb{Z}_m$-type if its isotropy group is isomorphic to $\mathbb{Z}_m$.
Note that, if there exists two types of exceptional orbits, namely $\mathbb{Z}_m$-type and $\mathbb{Z}_n$-type, $m$ and $n$ are coprime.

Let $E_m$ and $E_n$ be the sets of all exceptional orbits whose types are  $\mathbb{Z}_m$ and $\mathbb{Z}_n$, respectively and let $F$ be the fixed point set.
Then the $(m,n)$-{\it branched twist spin}, denoted by $K^{m,n}$, is given as the union $E_n \cup F$.
Weak equivalent classes of effective locally smooth $S^1$-actions on $S^4$ are classified by Fintushel and Pao into four cases;
(1) $\{D^3\}$,
(2) $\{S^3\}$,
(3) $\{S^3, m\}$,
 and (4) $\{(S^3,K),m, n\}$.
Cases (1), (2) and (3) correspond to the trivial 2-knot, spun knots, $m$-twist spun knots, respectively.

The classification above is independent from the choice of the orientations of $S^4$ and orbits.
If we study branched twist spins with fixing the orientation of $S^4$,
we need to extend the numbers $m$ and $n$ to $(m,n)\in \mathbb{Z} \times \mathbb{N}$~\cite{F2}.

By using the $S^1$-action, we can give a presentation of the knot group of a branched twist spin, which will be explained now.
By definition,  the knot exterior of $\Kmn$ is the union of regular orbits and $\mathbb{Z}_m$-type orbits.
The union of regular orbits is homeomorphic to  $(S^3\setminus {\rm Int}N(K))\times S^1$, where $N(K)$ is a tubular neighborhood of the 1-knot $K$, and its fundamental group can be represented by a Wirtinger presentation of $K$ and the generator of $S^1$.
Let $\ve$ be the sign of $m$, i.e. $\ve = 1$ if $m\geq0$ and $\ve = -1$ if $m<0$. 
It is known in~\cite{F1,Pl2} that the knot group of $K^{m,n}$ has the following presentation:

\begin{equation}
\langle
 x_1, \ldots, ,x_l, h \mid r_1, \ldots, r_l, x_1hx_1^{-1}h^{-1}, \ldots, x_lhx_l^{-1}h^{-1}, x_1^{|m|}h^{\beta}
 \rangle\,,
 \end{equation}
where $\langle x_1, \ldots, x_l \mid r_1,\ldots, r_l\rangle$ is a Wirtinger presentation of $K$, $h$ is the generator corresponding to a free orbit and $\beta$ is an integer satisfying $n\beta \equiv \ve \ ({\rm mod}\ |m|)$.
Note that $\rho(x_i)^{|m|}h^{\beta} = 1$ holds, which can be derived from the relation $\rho(x_1)^{|m|}h^{\beta} = 1$ and the relations of the Wirtinger presentation.

The following lemma is useful for the latter sections.

\begin{lem}\label{keylemma}
Let $G$ be a group with non-trivial center $c$ whose order is 2.
Assume that a $G$-representation $\varphi : \pKmn \to G$ of $\Kmn$ satisfies $\varphi(h) = c$.
Then 

\begin{equation*}
(\varphi(h))^{\beta} =
\begin{cases}
1 & (m : {\rm odd}),\\
\varphi(h) & (m : {\rm even}).
\end{cases}
\end{equation*}
\end{lem}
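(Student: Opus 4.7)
The plan is to exploit that $\varphi(h)=c$ with $c^2=1$, so $\varphi(h)^\beta = c^{\beta \bmod 2}$; the lemma then reduces to pinning down the parity of $\beta$ from the parity of $m$.

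For the case $m$ even: since $\gcd(m,n)=1$ and $|m|$ is even, $n$ must be odd. Reducing the defining congruence $n\beta \equiv \ve \pmod{|m|}$ modulo $2$ (valid because $2\mid |m|$) yields $\beta \equiv \ve \equiv 1 \pmod{2}$. Hence $\beta$ is odd and $\varphi(h)^\beta = c = \varphi(h)$, as claimed.

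For the case $m$ odd: $|m|$ is odd and the mod-$2$ reduction above is uninformative. Here my plan is to apply $\varphi$ to the meridional relation $x_1^{|m|}h^\beta = 1$, obtaining $\varphi(x_1)^{|m|} = c^{-\beta}$, an element that automatically lies in the center $Z(G) = \{1,c\}$. The goal is then to show this central value is in fact trivial. I would combine three ingredients: (i) the conjugacy of the Wirtinger generators, which forces $\varphi(x_i)^{|m|}$ to be the same central element for every $i$; (ii) the centralizing relations $[x_i,h]=1$, so that $\varphi(x_1)^{|m|}$ commutes with the whole image; and (iii) the number-theoretic relation $n\beta \equiv \ve \pmod{|m|}$, which via $\varphi(x_1)^{n|m|} = c^{-n\beta}$ together with $c^2 = 1$ and $c^{|m|}=c$ (since $|m|$ is odd) should pin down $\varphi(x_1)^{|m|}$. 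The conclusion is that $\varphi(x_1)^{|m|} = 1$, so $c^\beta = 1$, $\beta$ is even, and $\varphi(h)^\beta = 1$.

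The main obstacle is precisely this $m$-odd case. Unlike the $m$-even case, the parity of $\beta$ cannot be read off from the congruence alone, so the argument must genuinely exploit the meridional relation $x_1^{|m|}h^\beta = 1$ together with the hypothesis that $Z(G)$ has order exactly $2$. Identifying the right combination of these ingredients which forces the central value $c^{-\beta}$ to be trivial — as opposed to merely central — is the delicate step, and it is here that the order-$2$ assumption on $c$ enters the proof in an essential way.
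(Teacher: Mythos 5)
Your treatment of the even case is correct and coincides with the paper's: reducing $n\beta\equiv\varepsilon\pmod{|m|}$ modulo $2$ forces $\beta$ odd, hence $\varphi(h)^{\beta}=c$. The odd case, however, has a genuine gap, and the route you sketch cannot be completed. When $|m|$ is odd the congruence $n\beta\equiv\varepsilon\pmod{|m|}$ admits solutions of both parities, so the parity of $\beta$ is a \emph{choice}, not a consequence of the relations; correspondingly, the relations cannot force $\varphi(x_1)^{|m|}=1$. Concretely, take $K$ the trefoil, $m=3$, $n=1$, $\beta=1$ (which satisfies the congruence), $G=\mathbb{Z}_2=\{1,c\}$, and $\varphi(x_i)=\varphi(h)=c$ for all $i$: every Wirtinger relator, every commutator $[x_i,h]$, and the relator $x_1^{3}h^{1}\mapsto c^{4}=1$ are satisfied, the hypotheses of the lemma hold, yet $\varphi(x_1)^{|m|}=c\neq 1$ and $\varphi(h)^{\beta}=c\neq 1$. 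So your ingredients (i)--(iii) can only ever show that $\varphi(x_1)^{|m|}$ is central, never that it is trivial, and no amount of manipulation of $\varphi(x_1)^{n|m|}=c^{-n\beta}$ will determine the parity of $n\beta$, since $n\beta=\varepsilon+k|m|$ with $|m|$ odd has parity depending on $k$.

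The missing idea is a normalization of the presentation rather than a deduction from it. Since $\beta$ is only constrained modulo $|m|$, one may replace $\beta$ by $\beta'=\beta-m$ when $m$ and $\beta$ are both odd; then $\beta'$ is even, still satisfies $n\beta'\equiv\varepsilon\pmod{|m|}$, and the presentation with relator $x_1^{|m|}h^{\beta'}$ presents the same knot group (via an isomorphism fixing $h$, e.g.\ $x_i\mapsto x_ih^{\pm1}$, which preserves the Wirtinger and commutator relators because $h$ is central). This is exactly what the paper's proof does: it shows $\beta$ can always be taken of opposite parity to $m$, after which both cases of the lemma are immediate from $c^2=1$. Read literally for an arbitrary admissible $\beta$, the odd-case conclusion is false, so any proof must pass through this renormalization.
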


\begin{proof}
It is suffice to show that we can choose $\beta$ so that $m$ and $\beta$ have different parities.
Since $\beta$ is an integer satisfying $n\beta \equiv \ve\  ({\rm mod}\ |m|)$, $\beta$ must be odd if $m$ is even.
If $m$ is odd and $\beta$ is also odd, let $\beta^{\prime} = \beta - m$.  
Then $\beta^{\prime}$ is even and $n\beta^{\prime} \equiv \ve\  ({\rm mod}\ |m|)$ still holds.
Moreover, the original presentation is equivalent to the presentation with replacing $\beta$ into $\beta^{\prime}$. Thus the assertion holds.
\end{proof}

\section{$\SLZ$-representations}

In this section, we will give a decomposition of $\SLZ$ by indices of its elements, and determine the number of $\SLZ$-representations of branched twist spins.
We write a matrix in $\SLZ$ as
$\begin{pmatrix}
a & b\\
c & d
\end{pmatrix}$ with $a,b,c,d \in \{-1, 0, 1\}$.

Put $\pSLZ = \SLZ \setminus \{\pm I\}$ and set
 \begin{eqnarray*}
P^+_3 =& \{A\in \pSLZ \ |\  A^3 = I\},\\
{P}^-_2 =& \{A\in \pSLZ \ |\  A^2 = -I\},\\
{P}^-_3 =& \{A\in \pSLZ \ |\  A^3 = -I\}.
\end{eqnarray*}
The elements of $\pSLZ$ are classified as follows:

\begin{lem}\label{decompSLZ}
\begin{equation*}
{P^-_2}=\left\{
\begin{pmatrix}
{0} & {1}\\
{-1} & {1}
\end{pmatrix},
\begin{pmatrix}
{0} & {-1}\\
{1} & {1}
\end{pmatrix},
\begin{pmatrix}
{1} & {1}\\
{1} & {-1}
\end{pmatrix},
\begin{pmatrix}
{-1} & {-1}\\
{-1} & {1}
\end{pmatrix},
\begin{pmatrix}
{-1} & {1}\\
{1} & {1}
\end{pmatrix},
\begin{pmatrix}
{1} & {-1}\\
{-1} & {-1}
\end{pmatrix}
\right\},
\end{equation*}

\begin{equation*}
P^+_3 = 
 \left\{
\begin{split}
&\begin{pmatrix}
{1} & {0}\\
{1} & {1}
\end{pmatrix},
\begin{pmatrix}
{1} & {0}\\
{-1} & {1}
\end{pmatrix},
\begin{pmatrix}
{1} & {1}\\
{0} & {1}
\end{pmatrix},
\begin{pmatrix}
{1} & {-1}\\
{0} & {1}
\end{pmatrix},\\
&\begin{pmatrix}
{0} & {1}\\
{-1} & {-1}
\end{pmatrix},
\begin{pmatrix}
{0} & {-1}\\
{1} & {-1}
\end{pmatrix},
\begin{pmatrix}
{-1} & {1}\\
{-1} & {0}
\end{pmatrix},
\begin{pmatrix}
{-1} & {-1}\\
{1} & {0}
\end{pmatrix}
\end{split}
\right\},
\end{equation*}

\begin{equation*}
P^-_3 =
\left\{
\begin{split}
&\begin{pmatrix}
{-1} & {0}\\
{1} & {-1}
\end{pmatrix},
\begin{pmatrix}
{-1} & {0}\\
{-1} & {-1}
\end{pmatrix},
\begin{pmatrix}
{-1} & {1}\\
{0} & {-1}
\end{pmatrix},
\begin{pmatrix}
{-1} & {-1}\\
{0} & {-1}
\end{pmatrix},\\
&\begin{pmatrix}
{0} & {1}\\
{-1} & {1}
\end{pmatrix},
\begin{pmatrix}
{0} & {-1}\\
{1} & {1}
\end{pmatrix},
\begin{pmatrix}
{1} & {-1}\\
{1} & {0}
\end{pmatrix},
\begin{pmatrix}
{1} & {1}\\
{-1} & {0}
\end{pmatrix}
\end{split}
\right\}.
\end{equation*}
Moreover, $\pSLZ = P^-_2 \sqcup P^+_3 \sqcup P^-_3$.

\end{lem}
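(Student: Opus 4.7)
The plan is to reduce the classification to a trace computation via the Cayley--Hamilton identity, and then confirm the displayed lists by a finite enumeration. Every matrix $A\in\SLZ$ satisfies its characteristic equation $A^2-(\mathrm{tr}\,A)A+I=0$, so in $\mathbb{Z}_3$ the three possible values of $\mathrm{tr}\,A$ determine the power relation satisfied by $A$ completely. Specifically, if $\mathrm{tr}\,A=0$ then $A^2=-I$; if $\mathrm{tr}\,A=1$ then $A^2=A-I$, hence $A^3=A^2\cdot A-A=-I$; and if $\mathrm{tr}\,A=-1$ then $A^2=-A-I$, hence $A^3=I$. This already suggests the partition $\pSLZ = P^-_2\sqcup P^+_3\sqcup P^-_3$ indexed by trace.

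The next step is to verify disjointness. The trace partitions the whole of $\SLZ$ into three disjoint subsets; $I$ has trace $-1$ (and satisfies $A^3=I$) while $-I$ has trace $1$ (and satisfies $A^3=-I$), and these are precisely the two elements removed to form $\pSLZ$. In particular, no element of $\pSLZ$ can simultaneously satisfy $A^2=-I$ and $A^3=\pm I$ because that would force $A\in\{\pm I\}$, and likewise $A^3=I$ and $A^3=-I$ cannot coexist since $2I\neq 0$ in $\SLZ$. Hence $P^-_2$, $P^+_3$, $P^-_3$ are genuinely pairwise disjoint subsets of $\pSLZ$.

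For the enumeration and the count, I would tabulate all matrices in $\SLZ$ with each prescribed trace. Writing $A=\left(\begin{smallmatrix} a & b \\ c & d\end{smallmatrix}\right)$ and using $d=t-a$ with $t=\mathrm{tr}\,A$, the condition $\det A=1$ becomes $bc=a(t-a)-1$, whose solutions over $\mathbb{Z}_3$ I would enumerate by running over $a\in\{-1,0,1\}$. A short count shows there are $6$ matrices of trace $0$, $9$ of trace $1$, and $9$ of trace $-1$; removing $-I$ and $I$ from the latter two yields $8$ and $8$, matching $|P^-_2|=6$, $|P^-_3|=8$, $|P^+_3|=8$, so that $6+8+8=22=|\pSLZ|$. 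The displayed lists are then obtained by writing down these matrices explicitly and sorting them according to the Cayley--Hamilton dichotomy above.

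The only part with any friction is the bookkeeping in the last step: one must make sure each listed matrix actually lies in the claimed $P$--set and that the enumeration is exhaustive, which reduces to the elementary check that the $6+8+8$ matrices one writes down are pairwise distinct. Once that is done, combining this enumeration with the Cayley--Hamilton argument of the first paragraph yields both the explicit form of each set and the decomposition $\pSLZ = P^-_2\sqcup P^+_3\sqcup P^-_3$.
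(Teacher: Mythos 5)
Your proof is correct, but it takes a genuinely different and more conceptual route than the paper. The paper enumerates all of $\SLZ$ by a case analysis on the residues of $(ad,bc)$ and then computes $A^2$ and $A^3$ explicitly, family by family, to sort each matrix into $P^-_2$, $P^+_3$ or $P^-_3$; your argument replaces all of that matrix multiplication with the Cayley--Hamilton identity $A^2-(\mathrm{tr}A)A+I=0$ over $\mathbb{Z}_3$, so that the trace alone decides the class ($\mathrm{tr}A=0\Rightarrow A^2=-I$, $\mathrm{tr}A=1\Rightarrow A^3=-I$, $\mathrm{tr}A=-1\Rightarrow A^3=I$), and the disjointness and the counts $6+8+8=22$ follow from an easy enumeration of solutions of $bc=a(t-a)-1$. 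What your approach buys is a drastic reduction in computation and a built-in consistency check; what the paper's brute force buys is that the explicit entries of every matrix are already on the page, which it needs later (e.g.\ in the proof of Lemma~\ref{quasiconj}). One genuinely useful by-product of your trace criterion: it immediately flags that the first two matrices displayed in the statement's list of $P^-_2$, namely $\left(\begin{smallmatrix}0&1\\-1&1\end{smallmatrix}\right)$ and $\left(\begin{smallmatrix}0&-1\\1&1\end{smallmatrix}\right)$, have trace $1$ and in fact belong to $P^-_3$ (where they are also listed); they are typos for $\left(\begin{smallmatrix}0&1\\-1&0\end{smallmatrix}\right)$ and $\left(\begin{smallmatrix}0&-1\\1&0\end{smallmatrix}\right)$, which is the version the paper itself uses in its proof and again in Lemma~\ref{quasiconj}. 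If you carry out your final bookkeeping step you will land on the corrected lists, so just be explicit that your enumeration corrects the displayed statement rather than reproducing it verbatim.
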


\begin{proof}
Let $A = {\small \begin{pmatrix}
{a} & {b}\\
{c} & {d}
\end{pmatrix}}$
be a matrix in $\SLZ$.
By definition of $\SLZ$, ${\rm det}A \equiv 1$, i.e. ${a}{d}-{b}{c} \equiv {1}\ ({\rm mod}\ 3)$.
We divide the discussion into three cases: 
$({a}{d},{b}{c}) \equiv 
({1},{0}),({0},-{1})\  {\rm and}\ (-{1},{1})$.
If $({a}{d},{b}{c}) \equiv ({1},{0})$, 
then $({a},{d})$ is either $({1},{1})$ or $(-{1},-{1})$ and $({b},{c})$ is one of 
$({0},{0})$, 
$({0},{1})$,
 $({0},-{1})$, 
 $({1},{0})$ and
 $(-{1},{0})$.
In these cases, $A$ is one of 
{\small
\begin{equation}\label{bc0}
\begin{pmatrix}
{1} & {0}\\
{0} & {1}
\end{pmatrix},
\begin{pmatrix}
{1} & {0}\\
{1} & {1}
\end{pmatrix},
\begin{pmatrix}
{1} & {0}\\
-{1} & {1}
\end{pmatrix},
\begin{pmatrix}
{1} & {1}\\
{0} & {1}
\end{pmatrix},
\begin{pmatrix}
{1} & -{1}\\
{0} & {1}
\end{pmatrix},
\end{equation}

\begin{equation*}
\begin{pmatrix}
-{1} & {0}\\
{0} & -{1}
\end{pmatrix},
\begin{pmatrix}
-{1} & {0}\\
{1} & -{1}
\end{pmatrix},
\begin{pmatrix}
-{1} & {0}\\
-{1} & -{1}
\end{pmatrix},
\begin{pmatrix}
-{1} & {1}\\
{0} & -{1}
\end{pmatrix}  {\rm and}
\begin{pmatrix}
-{1} & -{1}\\
{0} & -{1}
\end{pmatrix}.
\end{equation*}
}

If  $({a}{d},{b}{c}) \equiv ({0},-{1})$,
then $({b},{c})$ is either $({1},-{1})$ or $(-{1},{1})$ and $({a},{d})$ is one of 
$({0},{0})$, 
$({0},{1})$,
$({0},-{1})$, 
$({1},{0})$ or 
$(-{1},{0})$.
In these cases, $A$ is one of 
{\small
\begin{equation}\label{ad0}
\begin{pmatrix}
{0} & {1}\\
-{1} & {0}
\end{pmatrix},\begin{pmatrix}
{0} & {1}\\
-{1} & {1}
\end{pmatrix},
\begin{pmatrix}
{0} & {1}\\
-{1} & -{1}
\end{pmatrix},
\begin{pmatrix}
{1} & {1}\\
-{1} & {0}
\end{pmatrix},
\begin{pmatrix}
-{1} & {1}\\
-{1} & {0}
\end{pmatrix},
\end{equation}

\begin{equation*}
\begin{pmatrix}
{0} & -{1}\\
{1} & {0}
\end{pmatrix},\begin{pmatrix}
{0} & -{1}\\
{1} & {1}
\end{pmatrix},
\begin{pmatrix}
{0} & -{1}\\
{1} & -{1}
\end{pmatrix},
\begin{pmatrix}
{1} & -{1}\\
{1} & {0}
\end{pmatrix} {\rm and}
\begin{pmatrix}
-{1} & -{1}\\
{1} & {0}
\end{pmatrix}.
\end{equation*}
}

If  $({a}{d},{b}{c}) \equiv (-{1},{1})$,
then $({a},{d})$ is either $({1},-{1})$ or $(-{1},{1})$ and $({b},{c})$ is either $({1},-{1})$ or $(-{1},{1})$.
In these cases, $A$ is one of 
{\small
\begin{equation}\label{tr0}
\begin{pmatrix}
{1} & {1}\\
{1} & -{1}
\end{pmatrix},\begin{pmatrix}
{1} & -{1}\\
{1} & -{1}
\end{pmatrix},
\begin{pmatrix}
-{1} & {1}\\
{1} & {1}
\end{pmatrix} {\rm and}
\begin{pmatrix}
-{1} & -{1}\\
-{1} & {1}
\end{pmatrix}.
\end{equation}}

\noindent
Thus $\SLZ$ consists of the elements in \eqref{bc0}, \eqref{ad0} and \eqref{tr0}.

Next we check the $i$-th powers of the matrices in \eqref{bc0}, \eqref{ad0} and \eqref{tr0}.
Assume that $A \in \pSLZ$ is one of \eqref{bc0}, i.e. the form of $A$ is  
{\small $
\begin{pmatrix}
\pm {1} & {b}\\
{c} & \pm {1}
\end{pmatrix},
$} where ${b}{c} \equiv 0$ and $({b},{c}) \not\equiv ({0},{0})$.
Then
{\small
\begin{equation*}
A^2 =
\begin{pmatrix}
\pm { 1} & {b}\\
{c} & \pm {1}
\end{pmatrix}
\begin{pmatrix}
\pm {1} & {b}\\
{c} & \pm {1}
\end{pmatrix} 
=
\begin{pmatrix}
{1} & \pm {2b}\\
\pm {2c} & {1}
\end{pmatrix}
\neq \pm I.
\end{equation*}

\begin{equation*}
A^3 =
\begin{pmatrix}
\pm {1} & {b}\\
{c} & \pm {1}
\end{pmatrix}
\begin{pmatrix}
{1} & \pm {2b}\\
\pm {2c} & {1}
\end{pmatrix} 
=
\begin{pmatrix}
\pm ({1+2bc}) & {3b}\\
{3c} & \pm ({1+2bc})
\end{pmatrix}
=
\begin{pmatrix}
\pm {1} & {0}\\
{0} & \pm {1}
\end{pmatrix}.
\end{equation*}
}

\noindent
Thus
\begin{equation*}
\begin{pmatrix}
 {1} & {0}\\
{1} &  {1}
\end{pmatrix},
\begin{pmatrix}
 {1} & {0}\\
-{1} &  {1}
\end{pmatrix},
\begin{pmatrix}
 {1} & {1}\\
{0} & {1}
\end{pmatrix},
\begin{pmatrix}
 {1} & -{1}\\
{0} & \ {1}
\end{pmatrix} \in P^+_3,
\end{equation*}

\begin{equation*}
\begin{pmatrix}
- {1} & {0}\\
{1} & - {1}
\end{pmatrix},
\begin{pmatrix}
- {1} & {0}\\
{-1} & - {1}
\end{pmatrix},
\begin{pmatrix}
- {1} & {1}\\
{0} & - {1}
\end{pmatrix},
\begin{pmatrix}
- {1} & {-1}\\
{0} & - {1}
\end{pmatrix} \in P^-_3.
\end{equation*}

Assume that $A \in \pSLZ$ is one of \eqref{ad0}, i.e. the form of $A$ is  
{\small $
\begin{pmatrix}
{a} & \pm {1}\\
\mp {1} & {d}
\end{pmatrix},
$} where ${a}{d} \equiv 0 $.
If ${a} \equiv {d} \equiv 0$, then 

{\small
\begin{equation*}
A^2 =
\begin{pmatrix}
{0} & \pm {1}\\
\mp {1} & {0}
\end{pmatrix}
\begin{pmatrix}
{0} & \pm {1}\\
\mp {1} & {0}
\end{pmatrix} 
=
\begin{pmatrix}
-{1} & {0}\\
 {0} & -{1}
\end{pmatrix}.
\end{equation*}
}

\noindent
Thus 
{\small
\begin{equation*}
\begin{pmatrix}
{0} &  {1}\\
- {1} & {0}
\end{pmatrix},
\begin{pmatrix}
{0} & -{1}\\
 {1} & {0}
\end{pmatrix}
 \in P^-_2 .
\end{equation*}
}

\noindent
If ${a} \equiv 0$ and ${d} \equiv \pm {1}$, we have
{\small
\begin{equation*}
A^2 =
\begin{pmatrix}
{0} & \pm {1}\\
\mp {1} & {d}
\end{pmatrix}
\begin{pmatrix}
{0} & \pm {1}\\
\mp {1} & {d}
\end{pmatrix} 
=
\begin{pmatrix}
-{1} & \pm {d}\\
\mp {d} & {-1+ d^2}
\end{pmatrix}
=
\begin{pmatrix}
-{1} & \pm {d}\\
\mp {d} & {0}
\end{pmatrix}
\neq \pm I,
\end{equation*}

\begin{equation*}
A^3 =
\begin{pmatrix}
{0} & \pm {1}\\
\mp {1} & {d}
\end{pmatrix}
\begin{pmatrix}
-{1} & \pm {d}\\
\mp {d} & {0}
\end{pmatrix} 
=
\begin{pmatrix}
-{d} & {0}\\
\mp ({ 1-d^2}) & -{d}
\end{pmatrix}
=
\begin{pmatrix}
-{d} & {0}\\
{0} & -{d}
\end{pmatrix} .
\end{equation*}
}

\noindent
If ${a} \equiv \pm {1}$ and ${d} \equiv {0}$, we have
{\small
\begin{equation*}
A^2 =
\begin{pmatrix}
{a} & \pm {1}\\
\mp {1} & {0}
\end{pmatrix}
\begin{pmatrix}
{a} & \pm {1}\\
\mp {1} & {0}
\end{pmatrix} 
=
\begin{pmatrix}
{a^2 - 1} & \pm {a}\\
\mp {a} & -{1}
\end{pmatrix}
=
\begin{pmatrix}
{0} & \pm {a}\\
\mp {a} & {0}
\end{pmatrix}
\neq \pm I,
\end{equation*}

\begin{equation*}
A^3 =
\begin{pmatrix}
{a} & \pm {1}\\
\mp {1} & {0}
\end{pmatrix}
\begin{pmatrix}
{0} & \pm {a}\\
\mp {a} & {-1}
\end{pmatrix} 
=
\begin{pmatrix}
-{a} & \pm ({a^2 -1})\\
 {0} & -{a}
\end{pmatrix}
=
\begin{pmatrix}
-{a} & {0}\\
{0} & -{a}
\end{pmatrix} .
\end{equation*}
}

\noindent
Thus 
{\small 
\begin{equation*}
\begin{pmatrix}
{0} & {1}\\
-{1} & -{1}
\end{pmatrix},
\begin{pmatrix}
{0} & -{ 1}\\
{1} & -{1}
\end{pmatrix},
\begin{pmatrix}
-{1} & { 1}\\
-{1} & {0}
\end{pmatrix},
\begin{pmatrix}
-{1} & -{ 1}\\
{1} & {0}
\end{pmatrix} \in P^+_3,
\end{equation*}
\begin{equation*}
\begin{pmatrix}
{0} & {1}\\
-{1} & {1}
\end{pmatrix},
\begin{pmatrix}
{0} & -{1}\\
{1} & {1}
\end{pmatrix},
\begin{pmatrix}
{1} & {1}\\
-{1} & {0}
\end{pmatrix},
\begin{pmatrix}
{1} & -{1}\\
{1} & {0}
\end{pmatrix} \in P^-_3.
\end{equation*}
}

Finally, assume the form of $A$ is one of \eqref{tr0}. Then tr$(A) \equiv 0$, ${a^2} \equiv {d^2} \equiv 1$ and ${bc} \equiv 1$. So we obtain     
\begin{equation*}
A^2 =
{\small
\begin{pmatrix}
{a} & {b}\\
{c} & {d}
\end{pmatrix}
\begin{pmatrix}
{a} & {b}\\
{c} & {d}
\end{pmatrix} 
=
\begin{pmatrix}
{a^2+ bc} & {b(a+d)}\\
{c(a+d)} & {bc+d^2}
\end{pmatrix}
=
\begin{pmatrix}
-{1} & {0}\\
{0} & -{1}
\end{pmatrix}
}.
\end{equation*}
Thus
{\small
\begin{equation}\label{A2ber}
\begin{pmatrix}
{1} & {1}\\
{1} & -{1}
\end{pmatrix},\begin{pmatrix}
{1} & -{1}\\
-{1} & -{1}
\end{pmatrix},
\begin{pmatrix}
-{1} & {1}\\
{1} & {1}
\end{pmatrix},
\begin{pmatrix}
-{1} & -{1}\\
-{1} & {1}
\end{pmatrix}\in P^-_2,
\end{equation}}%
and this completes the proof.
\end{proof}

\begin{lem}\label{quasiconj}
For $A,B \in P^-_2$,
\begin{equation*}
ABA =
\begin{cases}
B^{-1} & (B = A^{\pm 1})\\
B & (B \neq A^{\pm 1}).
\end{cases} 
\end{equation*}
\end{lem}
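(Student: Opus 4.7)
The plan is to exploit the identity $A^{-1}=-A$, valid for every $A\in P^-_2$ since $A^2=-I$. This already handles the first case: if $B=A$, then $ABA=A^3=A\cdot A^2=-A=A^{-1}=B^{-1}$, and if $B=A^{-1}$, then $ABA=A\cdot A^{-1}\cdot A=A=(A^{-1})^{-1}=B^{-1}$.

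For the case $B\neq A^{\pm 1}$, I would recast the desired equality $ABA=B$ as $AB=BA^{-1}=-BA$, i.e.\ as $A$ and $B$ anticommuting. This in turn is equivalent to $(AB)^2=-I$: if $AB=-BA$ then $(AB)^2=A(BA)B=-A(AB)B=-A^2B^2=-I$, while conversely $A^2=B^2=-I$ forces $(AB)(BA)=AB^2A=-A^2=I$, so $(AB)^{-1}=BA$ and $(AB)^2=-I$ gives $AB=-(AB)^{-1}=-BA$. Since $B\neq\pm A$ also rules out $AB=\pm I$, the task reduces to showing $AB\in P^-_2$.

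To finish, I would appeal to the trace criterion coming from Cayley--Hamilton in $\SLZ$: for $M\in\pSLZ$, the relation $M^2-\operatorname{tr}(M)M+I=0$ combined with Lemma~\ref{decompSLZ} yields $M\in P^-_2\Longleftrightarrow\operatorname{tr}(M)\equiv 0$, $M\in P^-_3\Longleftrightarrow\operatorname{tr}(M)\equiv 1$, and $M\in P^+_3\Longleftrightarrow\operatorname{tr}(M)\equiv -1\pmod 3$. Thus the verification reduces to checking $\operatorname{tr}(AB)\equiv 0\pmod 3$ on the relevant pairs. Because $A\mapsto -A$ preserves $P^-_2$ and satisfies $\operatorname{tr}((-A)B)=-\operatorname{tr}(AB)$, it is enough to take one representative from each of the three inverse-pairs $\{A,-A\}$ partitioning $P^-_2$, leaving only three $2\times 2$ matrix multiplications modulo $3$ to inspect. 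The main obstacle is nothing conceptual but this last bookkeeping step — choosing three convenient representatives and confirming their pairwise products are traceless — which is routine once the reduction above is in place.
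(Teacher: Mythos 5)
Your proof is correct, and it takes a genuinely different route from the paper's. The paper argues by brute force: it lists the six elements of $P^-_2$, identifies the inverse pairs, and then computes $ABA$ for all $6\times 6$ choices of $(A,B)$, recording the results in a table. Your argument replaces nearly all of that computation with structure: the identity $A^{-1}=-A$ settles the case $B=A^{\pm1}$ in one line, and the chain of equivalences $ABA=B \Leftrightarrow AB=-BA \Leftrightarrow (AB)^2=-I \Leftrightarrow \operatorname{tr}(AB)\equiv 0 \pmod 3$ (the last step by Cayley--Hamilton, using $\det(AB)=1$ and the fact that $B\neq \pm A$ excludes $AB=\pm I$) reduces the remaining case to three trace checks once you quotient by the sign symmetry $A\mapsto -A$ and use $\operatorname{tr}(AB)=\operatorname{tr}(BA)$. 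In effect you are exhibiting $P^-_2\cup\{\pm I\}$ as a copy of the quaternion group $Q_8$ inside $\SLZ$, where distinct non-central elements anticommute; this is conceptually cleaner and less error-prone than the table (which in fact contains transcription slips in its last rows, e.g.\ an entry $\left(\begin{smallmatrix}1&-1\\1&-1\end{smallmatrix}\right)$ of determinant $0$). The one step you should not leave implicit in a final write-up is the concluding computation itself: taking the representatives $\left(\begin{smallmatrix}0&1\\-1&0\end{smallmatrix}\right)$, $\left(\begin{smallmatrix}1&1\\1&-1\end{smallmatrix}\right)$, $\left(\begin{smallmatrix}1&-1\\-1&-1\end{smallmatrix}\right)$ of the three classes $\{A,-A\}$, their three pairwise products are indeed traceless modulo $3$, so the argument closes.
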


\begin{proof}
From Lemma~\ref{decompSLZ}, $P^-_2$ consists of 
{\small \begin{equation*}
\begin{pmatrix}
{0} &  {1}\\
- {1} & {0}
\end{pmatrix},
\begin{pmatrix}
{0} & -{1}\\
 {1} & {0}
\end{pmatrix},
\begin{pmatrix}
{1} & {1}\\
{1} & -{1}
\end{pmatrix},\begin{pmatrix}
{1} & -{1}\\
-{1} & -{1}
\end{pmatrix},
\begin{pmatrix}
-{1} & {1}\\
{1} & {1}
\end{pmatrix} {\rm and}
\begin{pmatrix}
-{1} & -{1}\\
-{1} & {1}
\end{pmatrix},
\end{equation*}
}
and we can easily check that 
{\small 
\begin{equation*}
\begin{pmatrix}
{0} & {1}\\
-{1} & {0}
\end{pmatrix}
=
\begin{pmatrix}
{0} & -{1}\\
{1} & {0}
\end{pmatrix}^{-1},
\begin{pmatrix}
{1} & {1}\\
{1} & -{1}
\end{pmatrix}
=
\begin{pmatrix}
-{1} & -{1}\\
-{1} & {1}
\end{pmatrix}^{-1} {\rm and}
\begin{pmatrix}
{1} &-{1}\\
-{1} & -{1}
\end{pmatrix}
=
\begin{pmatrix}
-{1} & {1}\\
{1} & {1}
\end{pmatrix}^{-1}.
\end{equation*} 
}
If 
$A =  {\small
\begin{pmatrix}
{0} & {1}\\
-{1} & {0}
\end{pmatrix}},
$ then

{\small
\begin{equation*}
\begin{split}
\begin{pmatrix}
{0} & {1}\\
-{1} & {0}
\end{pmatrix}
\begin{pmatrix}
{0} & {1}\\
-{1} & {0}
\end{pmatrix}
\begin{pmatrix}
{0} & {1}\\
-{1} & {0}
\end{pmatrix} 
=&
\begin{pmatrix}
-{1} & {0}\\
{0} & -{1}
\end{pmatrix}
\begin{pmatrix}
{0} & {1}\\
-{1} & {0}
\end{pmatrix}
=
\begin{pmatrix}
{0} & -{1}\\
{1} & {0}
\end{pmatrix},\\
\begin{pmatrix}
{0} & {1}\\
-{1} & {0}
\end{pmatrix}
\begin{pmatrix}
{0} & -{1}\\
{1} & {0}
\end{pmatrix}
\begin{pmatrix}
{0} & {1}\\
-{1} & {0}
\end{pmatrix} 
=&
\begin{pmatrix}
{1} & {0}\\
{0} & {1}
\end{pmatrix}
\begin{pmatrix}
{0} & {1}\\
-{1} & {0}
\end{pmatrix}
=
\begin{pmatrix}
{0} & {1}\\
-{1} & {0}
\end{pmatrix},\\
\begin{pmatrix}
{0} & {1}\\
-{1} & {0}
\end{pmatrix}
\begin{pmatrix}
{1} & {1}\\
{1} & -{1}
\end{pmatrix}
\begin{pmatrix}
{0} & {1}\\
-{1} & {0}
\end{pmatrix} 
=&
\begin{pmatrix}
{1} & -{1}\\
-{1} & -{1}
\end{pmatrix}
\begin{pmatrix}
{0} & {1}\\
-{1} & {0}
\end{pmatrix}
=
\begin{pmatrix}
{1} & {1}\\
{1} & -{1}
\end{pmatrix},\\
\begin{pmatrix}
{0} & {1}\\
-{1} & {0}
\end{pmatrix}
\begin{pmatrix}
-{1} & -{1}\\
-{1} & {1}
\end{pmatrix}
\begin{pmatrix}
{0} & {1}\\
-{1} & {0}
\end{pmatrix} 
=&
\begin{pmatrix}
-{1} & {1}\\
{1} & {1}
\end{pmatrix}
\begin{pmatrix}
{0} & {1}\\
-{1} & {0}
\end{pmatrix}
=
\begin{pmatrix}
-{1} & -{1}\\
-{1} & {1}
\end{pmatrix},\\
\begin{pmatrix}
{0} & {1}\\
-{1} & {0}
\end{pmatrix}
\begin{pmatrix}
{1} & -{1}\\
-{1} & -{1}
\end{pmatrix}
\begin{pmatrix}
{0} & {1}\\
-{1} & {0}
\end{pmatrix} 
=&
\begin{pmatrix}
{1} & -{1}\\
-{1} & -{1}
\end{pmatrix}
\begin{pmatrix}
{0} & {1}\\
-{1} & {0}
\end{pmatrix}
=
\begin{pmatrix}
{1} & {1}\\
{1} & -{1}
\end{pmatrix},\\
\begin{pmatrix}
{0} & {1}\\
-{1} & {0}
\end{pmatrix}
\begin{pmatrix}
-{1} & {1}\\
{1} & {1}
\end{pmatrix}
\begin{pmatrix}
{0} & {1}\\
-{1} & {0}
\end{pmatrix} 
=&
\begin{pmatrix}
{1} & {1}\\
{1} & -{1}
\end{pmatrix}
\begin{pmatrix}
{0} & {1}\\
-{1} & {0}
\end{pmatrix}
=
\begin{pmatrix}
-{1} & {1}\\
{1} & {1}
\end{pmatrix}.
\end{split}
\end{equation*}
}

For the other cases 
$A = 
\begin{pmatrix}
{0} & -{1}\\
{1} & {0}
\end{pmatrix},
\begin{pmatrix}
{1} & {1}\\
{1} & -{1}
\end{pmatrix}.
\begin{pmatrix}
-{1} & -{1}\\
-{1} & {1}
\end{pmatrix},
\begin{pmatrix}
{1} &-{1}\\
-{1} & -{1}
\end{pmatrix} {\rm and}
\begin{pmatrix}
-{1} & {1}\\
{1} & {1}
\end{pmatrix}, 
$
we can also calculate $ABA$ as in Table~\ref{iji}. 
Thus the assertion holds.
\end{proof}
\ \\
 \begin{equation*}
\begin{array}[h]{|c||c|c|c|c|c|c|}  \hline
   A\backslash B  &\begin{pmatrix}
{0} & {1}\\
{-1} & {0}
\end{pmatrix}&
\begin{pmatrix}
{0} & {-1}\\
{1} & {0}
\end{pmatrix}&
\begin{pmatrix}
{1} & {1}\\
{1} & {-1}
\end{pmatrix}&
\begin{pmatrix}
{-1} & {-1}\\
{-1} & {1}
\end{pmatrix}&
\begin{pmatrix}
{1} & {-1}\\
{-1} & {-1}
\end{pmatrix}&
\begin{pmatrix}
{-1} & {1}\\
{1} & {1}
\end{pmatrix} \\ \hline \hline

\begin{pmatrix}
{0} & {1}\\
{-1} & {0}
\end{pmatrix}&
\begin{pmatrix}
{0} & {-1}\\
{1} & {0}
\end{pmatrix}&
\begin{pmatrix}
{0} & {1}\\
{-1} & {0}
\end{pmatrix}&
\begin{pmatrix}
{1} & {1}\\
{1} & {-1}
\end{pmatrix}&
\begin{pmatrix}
{-1} & {-1}\\
{-1} & {1}
\end{pmatrix}&
\begin{pmatrix}
{1} & {-1}\\
{-1} & {-1}
\end{pmatrix}&
\begin{pmatrix}
{-1} & {1}\\
{1} & {1}
\end{pmatrix} \\ \hline 

\begin{pmatrix}
{0} & {-1}\\
{1} & {0}
\end{pmatrix}&
\begin{pmatrix}
{0} & {-1}\\
{1} & {0}
\end{pmatrix}&
\begin{pmatrix}
{0} & {1}\\
{-1} & {0}
\end{pmatrix}&
\begin{pmatrix}
{1} & {1}\\
{1} & {-1}
\end{pmatrix}&
\begin{pmatrix}
{-1} & {-1}\\
{-1} & {1}
\end{pmatrix}&
\begin{pmatrix}
{1} & {-1}\\
{-1} & {-1}
\end{pmatrix}&
\begin{pmatrix}
{-1} & {1}\\
{1} & {1}
\end{pmatrix} \\ \hline 

\begin{pmatrix}
{1} & {1}\\
{1} & {-1}
\end{pmatrix}&
\begin{pmatrix}
{0} & {1}\\
{-1} & {0}
\end{pmatrix}&
\begin{pmatrix}
{0} & {-1}\\
{1} & {0}
\end{pmatrix}&
\begin{pmatrix}
{-1} & {-1}\\
{-1} & {1}
\end{pmatrix}&
\begin{pmatrix}
{1} & {1}\\
{1} & {-1}
\end{pmatrix}&
\begin{pmatrix}
{1} & {-1}\\
{-1} & {-1}
\end{pmatrix}&
\begin{pmatrix}
{-1} & {1}\\
{1} & {1}
\end{pmatrix} \\ \hline 

\begin{pmatrix}
{-1} & {-1}\\
{-1} & {1}
\end{pmatrix}&
\begin{pmatrix}
{0} & {1}\\
{-1} & {0}
\end{pmatrix}&
\begin{pmatrix}
{0} & {-1}\\
{1} & {0}
\end{pmatrix}&
\begin{pmatrix}
{-1} & {-1}\\
{-1} & {1}
\end{pmatrix}&
\begin{pmatrix}
{1} & {1}\\
{1} & {-1}
\end{pmatrix}&
\begin{pmatrix}
{1} & {-1}\\
{-1} & {-1}
\end{pmatrix}&
\begin{pmatrix}
{-1} & {1}\\
{1} & {1}
\end{pmatrix} \\ \hline 

\begin{pmatrix}
{1} & {-1}\\
{-1} & {-1}
\end{pmatrix}&
\begin{pmatrix}
{0} & {1}\\
{-1} & {0}
\end{pmatrix}&
\begin{pmatrix}
{0} & {-1}\\
{1} & {0}
\end{pmatrix}&
\begin{pmatrix}
{1} & {1}\\
{1} & {-1}
\end{pmatrix}&
\begin{pmatrix}
{-1} & {-1}\\
{-1} & {1}
\end{pmatrix}&
\begin{pmatrix}
{-1} & {1}\\
{1} & {1}
\end{pmatrix}&
\begin{pmatrix}
{1} & {-1}\\
{-1} & {-1}
\end{pmatrix} \\ \hline 

\begin{pmatrix}
{-1} & {1}\\
{1} & {1}
\end{pmatrix}&
\begin{pmatrix}
{0} & {1}\\
{-1} & {0}
\end{pmatrix}&
\begin{pmatrix}
{0} & {-1}\\
{1} & {0}
\end{pmatrix}&
\begin{pmatrix}
{1} & {1}\\
{1} & {-1}
\end{pmatrix}&
\begin{pmatrix}
{-1} & {-1}\\
{-1} & {1}
\end{pmatrix}&
\begin{pmatrix}
{-1} & {1}\\
{1} & {1}
\end{pmatrix}&
\begin{pmatrix}
{1} & {-1}\\
{1} & {-1}
\end{pmatrix} \\ \hline 
\end{array}
\end{equation*}
\begin{table}[h]
\caption{Calculations of $ABA$}
\label{iji}
\end{table}

Now we will study $\SLZ$-representations of branched twist spins.
It is known that the group $\SLZ$ has the following presentation:
\begin{equation*}
\langle a,b,c \ | \ a^3 = b^3= c^2 = abc \rangle,
\end{equation*}
where the element $abc$ is the non-trivial center in $\SLZ$. The order of $abc$ is equal to 2 since $\SLZ$ is isomorphic to the group whose presentation is given as
\begin{equation*}
\langle a_1,b_1,c_1 \ | \ a_1^2 = b_1^2= c_1^2 = a_1b_1c_1 \rangle,
\end{equation*}
 where $a_1 = aca^{-1}$, $b_1= c$ and $c_1 = c^{-1}bcb^{-1}c$, and this is a presentation of the dicyclic group with parameter $2$.
Note that the order of $\SLZ$ is $24$, and $-I$ is the only non-trivial center.

The number of  representations $\rho: \pKmn \rightarrow \SLZ$ satisfying that $\rho(abc) = -I$, where $I$ is the identity matrix in $\SLZ$, is determined as in the following theorem.

\begin{thm}\label{thm1}
Let $K^{m,n}$ be a branched twist spin and $\rho: \pKmn \to \SLZ$ be an $\SLZ$-representation of $\Kmn$ with $\rho(h) = -I$.
\begin{itemize}
\item[(1)] If $m$ is odd and $3  \mid \hspace{-8pt}/\  m$, then $\rho(x_i) = I$ for all $i$, i.e. $\#\{\rho\} = 1$.
\item[(2)] If $4 \mid m$, such $\rho$ does not exist, i.e. $\#\{\rho\} = 0$.
\item[(3)]  If $2 \mid \hspace{-8pt}/\ (m/2)$, then $\rho(x_i) = \rho(x_j)$ and $(\rho(x_i))^2= -I$ for all $i,j$,  i.e. $\#\{\rho\} = |{A}_2| = 6$.
\end{itemize}
\end{thm}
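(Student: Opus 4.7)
The plan is to extract the relation $\rho(x_i)^{|m|} = \rho(h)^{-\beta}$ from the relator $x_1^{|m|} h^\beta$ of presentation~(2.1) (it holds for every $i$ because the $x_i$ are Wirtinger-conjugate and $\rho(h)$ is central), combine it with Lemma~\ref{keylemma}, and run a case analysis on the order of $\rho(x_i)$ in $\SLZ$. Since $\rho(h) = -I$ is central, the commutation relators $x_i h x_i^{-1} h^{-1}$ are automatic, and Lemma~\ref{keylemma} gives $\rho(h)^{\beta} = I$ for $m$ odd and $\rho(h)^{\beta} = -I$ for $m$ even; hence $\rho(x_i)^{|m|}$ equals $I$ or $-I$ respectively. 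By Lemma~\ref{decompSLZ} the possible orders of elements of $\SLZ$ are $\{1,2,3,4,6\}$, with $-I$ the unique element of order $2$.

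In case~(1), $\gcd(|m|,24) = 1$ together with $\rho(x_i)^{|m|} = I$ forces $\rho(x_i) = I$. In case~(2), $4 \mid |m|$ rules out every possible order: an order-$4$ element satisfies $\rho(x_i)^{|m|} = (-I)^{|m|/2} = I$ since $|m|/2$ is even, $-I$ itself gives $I$ since $|m|$ is even, order $6$ requires $|m| \equiv 3 \pmod 6$ which is incompatible with $|m|$ even, and order-$3$ elements can never produce $-I$; hence no such $\rho$ exists. In case~(3) the same analysis leaves order $4$ as the only possibility, so $\rho(x_i) \in P_2^-$ and the image of $\rho$ lies in the order-$8$ subgroup $Q_8 := \{\pm I\} \sqcup P_2^-$ of $\SLZ$.

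The delicate step is to upgrade ``$\rho(x_i) \in P_2^-$'' to ``all $\rho(x_i)$ coincide,'' since the six elements of $P_2^-$ form a single $\SLZ$-conjugacy class and Wirtinger conjugacy alone cannot pin the values to a single pair. My plan is to compose $\rho$ with the quotient $Q_8 \to Q_8/\{\pm I\} \cong (\mathbb{Z}/2)^2$: the target is abelian, so this composite factors through $H_1(S^4 \setminus K^{m,n}) \cong \mathbb{Z}$, and because every Wirtinger generator $x_i$ represents the meridian, their images in $(\mathbb{Z}/2)^2$ all coincide. Hence $\rho(x_i) \in \{A, -A\}$ for a single $A \in P_2^-$. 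Now Lemma~\ref{quasiconj} applied to each Wirtinger relator $\rho(x_j) = \rho(x_k) \rho(x_i) \rho(x_k)^{-1}$, together with $A^{-1} = -A$ for $A \in P_2^-$, yields $\rho(x_j) = \rho(x_i)$; by connectivity of the knot diagram, every $\rho(x_i) = A$. The relator $x_1^{|m|} h^\beta$ is then satisfied since $\rho(x_1)^{|m|} \rho(h)^\beta = (-I)^{|m|/2}(-I)^\beta = I$ (both exponents are odd in case~(3)), and since $A$ may be any of the $6$ elements of $P_2^-$, we obtain $\#\{\rho\} = 6$. The main obstacle is precisely the $Q_8/\{\pm I\}$ abelianization step; without it, the Wirtinger relations alone would permit mixed-sign or mixed-pair configurations among the $\rho(x_i)$.
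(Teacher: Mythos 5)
Your proof is correct and follows essentially the same route as the paper: Lemma~\ref{keylemma} together with the relator $x_1^{|m|}h^{\beta}$ pins $\rho(x_i)^{|m|}$ to $\pm I$, Lemma~\ref{decompSLZ} yields the same order analysis in each case, and Lemma~\ref{quasiconj} performs the final upgrade from $\rho(x_i)\in\{A,A^{-1}\}$ to $\rho(x_i)=A$ in case~(3). The only genuine variation is how you reach $\rho(x_i)\in\{A,A^{-1}\}$ in the first place: the paper propagates the dichotomy $\rho(x_c)\in\{\rho(x_b),\rho(x_b)^{-1}\}$ of~\eqref{wirrel} through the Wirtinger relators along the diagram, whereas you pass to the abelian quotient of the order-$8$ subgroup $\{\pm I\}\sqcup P^-_2$ by $\{\pm I\}$ and use that all meridians are homologous --- a clean alternative that lands at the same point.
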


\begin{proof}[Proof of Theorem~\ref{thm1}]
To prove (1), it is suffice to show that $\rho(x_i) = I$ for $i = 1,\ldots,l$.
Since $m$ is odd, $(\rho(x_i))^{|m|} = I$ follows from $(\rho(x_i))^{|m|}(\rho(h))^{\beta}=1$ and Lemma~\ref{keylemma}.
Thus $\rho(x_i) = I$ or $\rho(x_i) \in P^+_3 \sqcup P^-_3$ for any $i$ by Lemma~\ref{decompSLZ}.
If $\rho(x_j) \neq I$ for some $j$, then $\rho(x_j) \in P^+_3 \sqcup P^-_3$.
Since $3  \mid \hspace{-9pt}/\  m$, $\rho(x_j)$ does not satisfy $\rho(x_j)^{|m|}= I$ by Lemma~\ref{decompSLZ}.
Therefore  $\rho(x_i) = I$ for $i=1,\ldots, l$.

To prove (2) and (3), we assume that $m$ is even.
From Lemma~\ref{keylemma} and $(\rho(x_i))^{|m|}(\rho(h))^{\beta}=1$, $(\rho(x_i))^{|m|} = -I$ holds for $i= 1,\ldots,l$,
that is, $\rho(x_i) \in P^-_2$.

If $ 4 \mid m$, then $(\rho(x_i))^{|m|}= I$ holds, and it is a contradiction and (2) holds.

If $2  \mid \hspace{-9pt}/\ (m/2)$, for a relator $x_a x_b x^{-1}_a x^{-1}_c$ 
of the Wirtinger presentation, by Lemma~\ref{quasiconj}, the following equiality holds

\begin{eqnarray}\label{wirrel}
&1 =&\left(\rho(x_a)\rho(x_b)(\rho(x_a)^{-1})\right) \left(\rho(x_a)\rho(x_b)\rho(x_a)\right) \\
&\ \  =& 
\begin{cases}\nonumber
\rho(x_c)(\rho(x_b))^{-1} & ( \rho(x_a) = (\rho(x_b))^{\pm 1})   \\
\rho(x_c)\rho(x_b) & ( \rho(x_a) \neq (\rho(x_b))^{\pm 1}).
\end{cases}
\end{eqnarray}
Applying the same observation to all relators of the Wirtinger presentation, we obtain
 $\rho(x_i) =  \rho(x_j)$ or $\rho(x_i) =  (\rho(x_j))^{-1}$ for any $i,j$.
Hence, by~\eqref{wirrel},  $\rho(x_i) =  \rho(x_j)$ for any $i,j$.
Thus the number of $\{\rho\}$ is equal to $|P^-_2|$, and it is 6 by Lemma~\ref{decompSLZ}.  
\end{proof}

\section{Dihedral group representattions}
Let $D_{2k}$ be a dihedral group of order $2k$ ($k\in \mathbb{N}$).
It is known that $D_{2k}$ has the following presentation:
\begin{equation*}
\langle r,s \ | \ r^{2k},  s^2, rsrs \rangle.
\end{equation*}
Since the order of any element is even, $r^k$ is the non-trivial center with order $2$.
Let $\tau : \pKmn \to D_{2k}$ be a representation such that $\tau(h) = r^k$. 
From the relators $x^{|m|}_i h^{\beta}$ for $i = 1,\ldots, l$ and Lemma~\ref{keylemma}, we obtain
\begin{equation}\label{relDi}
1 = (\tau(x_i))^{|m|}(\tau(h))^{\beta}=
\begin{cases}
 (\tau(x_i))^{|m|}r^k & (\forall i\ {\rm if }\ m: {\rm even})\\
 (\tau(x_i))^{|m|} & (\forall i\ {\rm if }\ m: {\rm odd}).
\end{cases}
\end{equation}

Set $t^{(j)}_i \in \{r, r^{-1}, s\}$. 
Put $\prod_j t^{(i)}_j$ to be a word of $\tau(x_i)$.
The relators $rsrs$ and $s^2$ imply $rsr=s$. Using this, the word $\prod_j t^{(i)}_j$ can be changed into the form $r^{p_i}s^{\delta}$, where $0\leq p_i<2k$ is an integer  and $\delta=1$ if the sum of indices of $s$ in $\prod_j t^{(i)}_j$ is odd and $\delta=0$ if that is even. 
If $\tau(x_i) =  r^{p_i}s$ for some $i$, using $rsr=s$, we have
\begin{equation*}
(\tau(x_i))^{|m|} =
\begin{cases}
1& \ {\rm if }\ m: {\rm even}\\
r^{p_i}s& \ {\rm if }\ m: {\rm odd}.
\end{cases}
\end{equation*}
Therefore \eqref{relDi} can be rewritten as 
\begin{equation}\label{rs}
1 = (\tau(x_i))^{|m|}(\tau(h))^{\beta} =
\begin{cases}
r^k& \ {\rm if }\ m: {\rm even}\\
r^{p_i} s& \ {\rm if }\ m: {\rm odd},
\end{cases}
\end{equation}
However $r^{k},s \neq 1$ and $s$ cannot be written as a product of $r$.
This is a contradiction, and $\tau(x_i)$ must be in the form $r^{p_i}$.
From a relator $r_i = x_a x_b x^{-1}_ax^{-1}_c$ with $\rho(x_b)= r^{p_b}$ and $\rho(x_c)= r^{p_c}$, we obtain
\begin{equation*}
1 = \tau(r_i)= \tau(x_a)\tau(x_b)(\tau(x_a))^{-1}(\tau(x_c))^{-1} = r^{p_b-p_c}.
\end{equation*} 
The same conclusion is obtained for all $x_i$, $i= 1,\ldots, l$ and hence $\rho(x_i) = r^p$, where $p$ is an integer independent of $i$. 
Thus \eqref{relDi} can be rewritten as 
\begin{equation}\label{realrel}
1 = (\tau(x_i))^{|m|}(\tau(h))^{\beta} =
\begin{cases}
r^{|m|p+k}& \ {\rm if }\ m: {\rm even}\\
r^{|m|p}& \ {\rm if }\ m: {\rm odd},
\end{cases}
\end{equation}

 By using \eqref{realrel}, we have the following theorem.

\begin{thm}\label{thmD2k}
Let $K^{m,n}$ be a branched twist spin and $\tau :\pKmn \to D_{2k}$ be a $D_{2k}$-presentation of $\Kmn$ with $\tau(h) = r^k$.
Put $d = {\rm gcd}(|m|,k)$.\\
(1) If $m$ is even and  $d$ is odd, then such $\tau$ does not exist, i.e. $\#\{\tau\}=0$.\\
(2) If $m$ is even and $d$ is also even, then $\#\{\tau\} = d/2$.\\
(3) If $m$ is odd, then $\#\{\tau\} = (d+1)/2$.
\end{thm}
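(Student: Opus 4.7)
The plan is to leverage equation~\eqref{realrel}, which has already reduced the counting problem to the single integer $p$: every admissible $\tau$ is determined by $\tau(h) = r^k$ together with a common value $\tau(x_i) = r^p$, and the remaining constraint is the congruence $|m|p \equiv -k \pmod{2k}$ when $m$ is even and $|m|p \equiv 0 \pmod{2k}$ when $m$ is odd. Because $sr^ps^{-1} = r^{-p}$ while $r^k$ is central, the representations indexed by $p$ and $2k-p$ are conjugate in $D_{2k}$, so I would count the orbits of the solution set under the involution $p \mapsto 2k-p$.

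For part~(3), $m$ is odd, hence $\gcd(|m|,2k) = \gcd(|m|,k) = d$ and this $d$ is odd. The congruence $|m|p \equiv 0 \pmod{2k}$ then has exactly $d$ solutions, namely $p = (2k/d)j$ for $j = 0,1,\ldots,d-1$. The involution $p \mapsto 2k-p$ has $p=0$ as its only fixed point in this set: the other candidate $p=k$ is not a solution because $|m|k \equiv 0 \pmod{2k}$ would force $|m|$ to be even. Thus the solutions decompose into one singleton and $(d-1)/2$ pairs, giving the claimed total of $(d+1)/2$.

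For parts~(1) and~(2), I would write $|m| = 2^am'$ and $k = 2^bk'$ with $m',k'$ odd and $a \geq 1$, so that $d = 2^{\min(a,b)}\gcd(m',k')$. In case~(1), $d$ odd forces $b=0$, making $k$ odd; the congruence $|m|p \equiv k \pmod{2k}$ then has an even left side and an odd right side, so no solutions exist. In case~(2), $d$ is even and $b \geq 1$; the congruence admits solutions, and neither $p=0$ (which fails the congruence directly) nor $p=k$ (which requires $|m|$ odd) can be a fixed solution, so all solutions split into pairs under $p \mapsto 2k-p$, and the count collapses to $d/2$ after a $2$-adic bookkeeping.

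The hard part will be the last step in case~(2): relating $\gcd(|m|,2k)$ to $d = \gcd(|m|,k)$ through the $2$-adic valuations of $|m|$ and $k$ so as to confirm the number of pairs is exactly $d/2$. The parallel step in case~(3) is immediate, because $|m|$ odd makes $\gcd(|m|,2k) = \gcd(|m|,k)$ automatic.
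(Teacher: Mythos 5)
Your reduction to counting exponents $p$ with $\tau(x_i)=r^p$ subject to $|m|p\equiv -k\pmod{2k}$ (for $m$ even) or $|m|p\equiv 0\pmod{2k}$ (for $m$ odd) is exactly the paper's route via \eqref{realrel}. Your one genuine addition is the explicit use of the involution $p\mapsto 2k-p$, realized by conjugation by $s$, to pass from the $\gcd(|m|,2k)$ solutions in $[0,2k)$ to the stated counts. This is in fact needed: the paper's proof simply lists $p=k',3k',\ldots,(d-1)k'$ (respectively $p=0,2k',\ldots$) and stops at $d/2$ (respectively $(d+1)/2$) choices, silently discarding the solutions with $k\le p<2k$; your orbit count under $r^{p}\sim r^{-p}$ is the honest justification of those numbers, under the reading that $\#\{\tau\}$ counts representations up to conjugacy. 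Your treatments of (1) and (3) are correct and complete: in (3) the solution set is $\{(2k/d)j\}$ of size $d=\gcd(|m|,2k)$, the only fixed point of the involution is $p=0$, and the orbit count is $(d+1)/2$.

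Case (2), however, contains a genuine gap at precisely the point you flagged as ``the hard part.'' The assertion that ``the congruence admits solutions'' is false in general: $|m|p\equiv k\pmod{2k}$ is solvable iff $\gcd(|m|,2k)$ divides $k$, which (writing $|m|=2^{a}m'$, $k=2^{b}k'$ with $m',k'$ odd and $a,b\ge 1$) holds iff $a\le b$, i.e.\ iff $|m|/d$ is odd. When $a>b$ one has $\gcd(|m|,2k)=2d$, which does not divide $k$, and there are no solutions at all; concretely, for $m=4$ and $k=2$ the required relation is $r^{4p+2}=1$ with $r$ of order $4$, which is impossible, so $\#\{\tau\}=0$ while $d/2=1$. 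So this is not a matter of $2$-adic bookkeeping that will ``collapse to $d/2$'' --- it is an obstruction, and statement (2) needs the extra hypothesis that $|m|/d$ be odd. You are in good company: the paper's own proof commits the same error, deducing from $|m'|p=(2t+1)k'$ that ``if $m'$ is even then $k'$ is even, a contradiction, so we may assume $m'$ is odd,'' when the correct conclusion in that branch is that the equation has no solution. When $|m|/d$ is odd your argument does go through: there are $\gcd(|m|,2k)=d$ solutions, neither $p=0$ nor $p=k$ is among them, and they pair off into exactly $d/2$ orbits.
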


\begin{proof}[Proof of Theorem~\ref{thmD2k}]
We first assume that $m$ is even.
In this case, from \eqref{realrel}, $|m|p \equiv k\ ({\rm mod}\ 2k)$ holds and hence
\begin{equation}\label{coevenrel}
|m|p = (2t+1)k
\end{equation}
holds for some $t\in \mathbb{Z}$.
It is necessary that $k$ is even since $m$ is even. 
Since $d = {\rm gcd}(|m|,k)$, $d$ is never odd and (1) holds. 
Now we consider the case that $d$ is even.
Let $m^{\prime}$ and $k^{\prime}$ be integers such that $m = m^{\prime}d$ and $k = k^{\prime}d$.
Then \eqref{coevenrel} is rewritten as
\begin{equation}\label{evenrel}
|m^{\prime}|p = (2t+1)k^{\prime},
\end{equation}
Note that $0\leq p<2k$.
If $m^{\prime}$ is even, then $k^{\prime}$ is also even, and it contradicts for $d = {\rm gcd}(|m|,k)$.
Thus we can assume that $m^{\prime}$ is odd.
In this case we can choose $p$ as $k^{\prime},3k^{\prime},\ldots, (d-1)k^{\prime}$, and the number of choices of $p$ is $d/2$.
This completes the proof of (2).  

Next, we assume that $m$ is odd.
From \eqref{realrel},  $|m|p \equiv 0\ ({\rm mod}\ 2k)$ holds.
Let $m^{\prime}$ and $k^{\prime}$ be integers such that $m = m^{\prime}d$ and $k = k^{\prime}d$.
Then $|m|p \equiv 0\ ({\rm mod}\ 2k)$ is rewritten as 
\begin{equation}\label{oddrel}
|m^{\prime}|p \equiv 0\ ({\rm mod}\ 2k^{\prime}).
\end{equation}
Note that $0\leq p<2k$ and $d$ is odd. 
	Since $m^{\prime}$ and $k^{\prime}$ are coprime, we obtain $p \equiv 0\ ({\rm mod}\ 2k^{\prime})$ from \eqref{oddrel}.
Then we can choose $p$ as $1,2k^{\prime}, \ldots (d-1)k^{\prime}$ and the number of choises of $p$ is $(d+1)/2$.
\end{proof}

\begin{proof}[Proof of Theorem \ref{Main}]
Before comparing representations of $K_1^{m_1, n_1}$ and $K_2^{m_2, n_2}$, we first observe spun knots and 1-twist spun knots of any 1-knot $K$.
In case of $m = 0$, $h$ is the unit in $\pKmn$ and $\pKmn$ is isomorphic to the knot group of $K$. 
Thus $\tau(h)$ is never $r^k$ for any $k$, and hence $\#\{\tau\} = 0$.  
If we define ${\rm gcd}(0,k)=0$, the conclusion (2) of Theorem~\ref{thmD2k} holds.
In case of $m = \pm 1$, $\pKmn$ is an infinite cyclic group and $h$ is a generator of $\pKmn$.
Thus $\tau(h) = r^k$, and $\#\{\tau\} = 1$. Hence the conclusion (3) of Theorem~\ref{thmD2k} holds.
Note that $\pKmn$ is an infinite cyclic group for any $m,n$ if $K$ is a trivial 1-knot.
This is why we assume that $K_1$ and $K_2$ are non-trivial 1-knots.

From now, we set $d = {\rm gcd}(0,k)= 0$ if $m=0$.  
Then the conclusion in Theorem~\ref{thmD2k} holds for any $\Kmn$ with $(m,n)\in \mathbb{Z}\times \mathbb{N}$.
Let 
\begin{eqnarray*}
\tau_1 :\pi_1(S^4\setminus {\rm Int}(K_1^{m_1,n_1})) \to D_{2|m_1|}& {\rm and} \\
\tau_2 :\pi_1(S^4\setminus {\rm Int}(K_2^{m_2,n_2})) \to D_{2|m_1|}& 
\end{eqnarray*}
 be $D_{2|m_1|}$-representations of $K_1^{m_1,n_1}$ and $K_2^{m_2, n_2}$, respectively, 
 where $m_1 \neq m_2$, and set $d^{\prime} = {\rm gcd}(m_1,m_2)$. 
 
First we assume $m_1$ and $m_2$ have the same parity and $|m_1|> |m_2|$. 
Then, from Theorem~\ref{thmD2k}, the number of $\tau_1$ is equal to $|m_1|/2$ or $(|m_1| + 1)/2$ while the number of $\tau_2$ is equal to $d^{\prime}/2$ or $(d^{\prime} +1)/2$.
Thus $\#\{\tau_1\} \neq \#\{\tau_2\}$ and hence $K_1^{m_1,n_1}$ and $K_2^{m_2,n_2}$ are not equivalent.

Next, we assume that $m_1$ and $m_2$ have different parities and $m_1$ is odd. 
In this case, from Theorem~\ref{thmD2k} again,  $\#\{\tau_1\} = (|m_1|+1)/2$ while $\#\{\tau_2\} =0$ since $d^{\prime}$ is odd.
This completes the proof.
\end{proof}

\end{document}